\sloppy\pagestyle{plain}
\newtheorem{theorem}{Theorem}[section]
\newtheorem{lemma}[theorem]{Lemma}
\newtheorem*{corollary*}{Corollary}
\newtheorem*{proposition*}{Proposition}
\newtheorem*{maincorollary*}{Main Corollary}
\newtheorem*{conjecture*}{Conjecture}
\newtheorem*{problem*}{Problem}
\newtheorem*{theorem*}{Theorem}
\newtheorem*{maintheorem*}{Main Theorem}
\newtheorem*{auxiliarytheorem*}{Auxiliary Theorem}
\theoremstyle{definition}
\newtheorem*{example*}{Example}
\theoremstyle{remark}
\newtheorem{remark}[theorem]{Remark}
\newtheorem*{remark*}{Remark}
\makeatletter\@addtoreset{equation}{section} \makeatother
\author{Ivan Cheltsov and Jihun Park}
\title{Degree of irrationality of Fano threefold hypersurfaces}
\address{ \emph{Ivan Cheltsov}\newline \textnormal{School of Mathematics, University of Edinburgh, Edinburgh, Scotland
\newline
\texttt{I.Cheltsov@ed.ac.uk}}}
\address{ \emph{Jihun Park}\newline \textnormal{Center for Geometry and Physics, Institute for Basic Science, Pohang, Korea \newline
Department of Mathematics, POSTECH, Pohang, Korea \newline
\texttt{wlog@postech.ac.kr}}}
\begin{document}

\begin{abstract}
We study degree of irrationality of quasismooth anticanonically embedded weighted Fano 3-fold hypersurfaces that have terminal
singularities.
\end{abstract}

\maketitle

Throughout this paper, all varieties are assumed to be projective and defined over~$\mathbb{C}$.

\section{Introduction}
\label{section:intro}

Let $a_1, \ldots, a_5$ be positive integers with $a_1\leqslant a_2\leqslant a_3\leqslant a_4\leqslant a_5$.
Let $X_{d}$ be a quasismooth and well-formed (\cite{IF00}) hypersurface of degree $d$ in  the weighted projective space $\mathbb{P}(a_1,a_{2},a_{3},a_{4},a_{5})$.
Suppose that
$$
d<a_1+a_{2}+a_{3}+a_{4}+a_{5}.
$$
and let $I=a_1+a_{2}+a_{3}+a_{4}+a_{5}-d$.
Then $X_d$ is a Fano threefold with quotient singularities.
Note that there exists infinitely many possibilities for the septuple  $(a_1,a_{2},a_{3},a_{4},a_{5},d,I)$.
If~$I=1$, all of them have been found in  \cite{JoKo01}.

Suppose,~in~addition, that the singularities of the 3-fold hypersurface $X_d$ are~terminal.
Then it follows from the Birkar's boundedness results (\cite{Birkar19,Birkar21}) that
there are finitely many possibilities for  $(a_1,a_{2},a_{3},a_{4},a_{5},d, I)$.
In fact, there are $130$ such possibilities, excluding weighted projective spaces,
that define $130$ families of weighted hypersurfaces (\cite{BrownSuzuki1,BrownSuzuki2, IF00}).
Fano 3-folds in these families have been studied in \cite{AbbanCheltsovPark,Cheltsov2007,Cheltsov2008,Cheltsov2009,CheltsovPark2006,CheltsovPark2009,CheltsovPark2017,CPR,KimOkadaWon,Okada2019,Ryder2006,Ryder2009}.
In particular, we know that $X_d$ is irrational if $I=1$, and we know all deformation families with $I\geqslant 2$
such that their general members are irrational.
In the following, we will label these $130$ deformation families by \textnumero 1, \textnumero 2, \textnumero 3, $\ldots$, \textnumero 130 as in \cite{AbbanCheltsovPark,CheltsovPark2017,CPR}.

\begin{remark*}
If $X_d$ is smooth, then it belongs to one of the following deformation families:
\begin{itemize}
\item[\textnumero 1:] $(a_1,a_{2},a_{3},a_{4},a_{5},d,I)=(1,1,1,1,1,4,1)$, i.e. $X_d$ is a quartic 3-fold,
\item[\textnumero 3:] $(a_1,a_{2},a_{3},a_{4},a_{5},d,I)=(1,1,1,1,3,6,1)$, i.e. $X_d$ is a sextic double solid.
\item[\textnumero 96:] $(a_1,a_{2},a_{3},a_{4},a_{5},d,I)=(1,1,1,1,1,3,2)$, i.e. $X_d$ is a cubic 3-fold,
\item[\textnumero 97:] $(a_1,a_{2},a_{3},a_{4},a_{5},d,I)=(1,1,1,1,1,3,2)$, i.e. $X_d$ is a quartic double solid,
\item[\textnumero 98:] $(a_1,a_{2},a_{3},a_{4},a_{5},d,I)=(1,1,1,2,3,6,2)$, i.e. $X_d$ is a double Veronese cone,
\item[\textnumero 104:] $(a_1,a_{2},a_{3},a_{4},a_{5},d,I)=(1,1,1,1,1,2,3)$, i.e. $X_d$ is a quadric 3-fold.
\end{itemize}
All members of the remaining $124$ deformation families are singular,
and their singularities are described~in~\cite{BrownSuzuki1,BrownSuzuki2, CPR}.
Note that $I=1$ for the families  \textnumero 1, \textnumero 2, \textnumero 3, $\ldots$, \textnumero 94, \textnumero 95.
\end{remark*}

In this paper, we study the degree of irrationality of Fano 3-folds in these $130$ families.
Recall from \cite{MoHe82} that the degree of irrationality $\mathrm{d}(V)$ of a variety $V$ is
the smallest possible degree of the generically finite dominant rational map $V\dasharrow\mathbb{P}^{n}$, where $n=\mathrm{dim}(V)$. Then
\begin{center}
the variety $V$ is rational $\iff$ $\mathrm{d}(V)=1$.
\end{center}
If $I\geqslant 2$, then $d<3a_{5}$, so the projection
$\mathbb{P}(1,a_{2},a_{3},a_{4},a_{5})\dasharrow\mathbb{P}(1,a_{2},a_{3},a_{4})$
induces a dominant rational map
$X_{d}\dasharrow\mathbb{P}(1,a_{2},a_{3},a_{4})$ such that it is either birational or generically two-to-one,
so that $\mathrm{d}(X_d)\in\{1,2\}$ in this case.
For $I=1$, we prove

\begin{maintheorem*}
Suppose that $I=1$. Then the following assertions hold:
\begin{enumerate}
\item $\mathrm{d}(X_{d})\in\{2,3\}$;
\item $\mathrm{d}(X_{d})=2$ if the 3-fold $X_d$ is not contained in the deformation families   \textnumero 1, \textnumero 19, \textnumero 28, \textnumero 39, \textnumero 49, \textnumero 59, \textnumero 66, \textnumero 84;
\item $\mathrm{d}(X_{d})=3$, if the 3-fold $X_d$ is a general member of one of the eight deformation families
\textnumero 1, \textnumero 19, \textnumero 28, \textnumero 39, \textnumero 49,
\textnumero 59, \textnumero 66, \textnumero 84.
\end{enumerate}
\end{maintheorem*}

\begin{conjecture*}
Suppose that $X_d$ is any quasismooth member in one of the eight deformation families
\textnumero 1, \textnumero 19, \textnumero 28, \textnumero 39, \textnumero 49, \textnumero 59, \textnumero 66, \textnumero 84.
Then $\mathrm{d}(X_{d})=3$.
\end{conjecture*}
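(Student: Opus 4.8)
The plan is to prove the two matching bounds $\mathrm{d}(X_d)\leqslant 3$ and $\mathrm{d}(X_d)\geqslant 3$ for \emph{every} quasismooth member of the eight families, not merely for a general one. The upper bound is already supplied by part (1) of the Main Theorem: the projection construction used there depends only on the weights $(a_1,\dots,a_5,d)$ and applies verbatim to any quasismooth $X_d$, so $\mathrm{d}(X_d)\leqslant 3$ holds universally. Since $X_d$ is irrational for $I=1$, we also have $\mathrm{d}(X_d)\geqslant 2$. Hence the entire content is the lower bound $\mathrm{d}(X_d)\geqslant 3$, i.e.\ the exclusion of a degree-$2$ dominant rational map $X_d\dasharrow\mathbb{P}^3$. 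I would first reformulate this intrinsically. Because $\mathrm{Cl}(X_d)=\mathbb{Z}\cdot(-K_{X_d})$ for these quasismooth hypersurfaces, such a map is given by a $4$-dimensional mobile linear system $\mathcal{M}\subseteq|-nK_{X_d}|$ whose general member triple meets in $2$ free points away from the base locus; and, since a degree-$2$ field extension is Galois in characteristic $0$, its existence is equivalent to $X_d$ being birational to a double cover of $\mathbb{P}^3$, equivalently to $X_d$ carrying a birational involution $\iota$ with $X_d/\iota$ rational.

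The engine for the exclusion is birational rigidity together with the Noether--Fano method. For the $I=1$ families, birational rigidity of general members is available from the works cited in the introduction (Corti--Pukhlikov--Reid and Cheltsov--Park), and for family \textnumero 1 de Fernex's theorem yields birational superrigidity of every smooth quartic threefold. I would apply the maximal-singularity analysis directly to the system $\mathcal{M}$ above: comparing $\mathcal{M}^3$, which equals $n^3(-K_{X_d})^3$ up to the corrections at the terminal quotient points, with the target value $2$, one sees that for $n$ large the $4n^2$-inequality forces a maximal centre. Untwisting by the explicit generators of $\mathrm{Bir}(X_d)$ computed in the rigidity analysis lowers $n$, and the finitely many remaining small-$n$ possibilities are ruled out by hand. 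The eight families are singled out precisely because, unlike the other $87$ families with $I=1$, their anticanonical degree and their configuration of terminal points leave no room for a $4$-section system of degree $2$, and because they admit no coordinate projection and no elementary (quadratic or elliptic) involution whose associated quotient is rational. The conceptually equivalent check is to run through every involution in $\mathrm{Bir}(X_d)$ and verify that $X_d/\iota$ is always irrational---not via Kodaira dimension, which is $-\infty$, but by passing to the Mori fibre space produced from $X_d/\iota$ by the minimal model program and invoking the irrationality of that fibre space.

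The hard part will be removing the genericity hypothesis. All of the rigidity and involution computations quoted above are carried out for general members, whereas for special quasismooth members new maximal centres may appear: either at the distinguished terminal quotient points of $X_d$, or at smooth points where the defining polynomial degenerates so that some hyperplane (or weighted-tangent) section acquires unexpectedly high contact. Such centres could in principle either break rigidity or manufacture an extra birational involution with rational quotient. Excluding them demands a uniform, family-by-family application of the Noether--Fano method---the $4n^2$-inequality together with the exclusion of curves, of smooth points, and of each type of terminal quotient singularity---valid for an \emph{arbitrary} admissible defining equation rather than a generic one. I expect this uniform exclusion of maximal centres across the eight families to be the main obstacle, since it is exactly the point at which the existing proofs invoke generality; by contrast, the concluding bookkeeping---verifying that whatever birational involutions survive have irrational quotients---is comparatively routine once $\mathrm{Bir}(X_d)$ is controlled.
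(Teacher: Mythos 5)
The statement you are trying to prove is stated in the paper as a \emph{conjecture}: the authors do not prove it, and your text does not prove it either. What you have written is a research programme, not a proof. Both of the steps you flag as ``the hard part'' are precisely the points the paper leaves open, and your proposal defers them rather than resolving them: you never actually carry out the ``uniform, family-by-family application of the Noether--Fano method'' for arbitrary (non-general) quasismooth members, you only predict that it will be difficult. A reviewer cannot accept ``I expect this to be the main obstacle'' as a step in a proof.

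There is also a concrete misjudgement in your plan. You describe the final step --- ``verifying that whatever birational involutions survive have irrational quotients'' --- as comparatively routine. It is not; it is the actual heart of the conjecture, and it is open. Special quasismooth members of these families really do admit biregular involutions (for family \textnumero 1, take the Fermat quartic $x^4+y^4+z^4+t^4+w^4=0$ and the involution $w\mapsto -w$), so no argument of the form ``$\mathrm{Bir}(X_d)=\mathrm{Aut}(X_d)$ contains no involution'' can possibly work for all members; one is forced to prove irrationality of the quotient $X_d/\iota$ for every such involution. The paper's Section~2 explains exactly why this is hard even in the quartic case: when the fixed locus of $\iota$ is a smooth quartic surface, Voisin's theorem applies, but when it is a plane quartic curve plus four points, the quotient is birational to a Mori fibre space whose general fibre is a degree-$2$ del Pezzo surface, and the authors can only say that ``one can try to apply the technique'' of Abban--Krylov, Krylov, and Okada to prove its irrationality. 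Your proposal silently absorbs this unresolved problem into ``bookkeeping,'' which is where it fails. Note also that for these families birational rigidity of \emph{all} quasismooth members (not just general ones) is essentially available from the cited memoir of Cheltsov--Park, so the genericity issue is less about rigidity than you suggest: what genericity buys in the paper is triviality of $\mathrm{Aut}(X_d)$, and that is exactly what cannot be salvaged for special members.
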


If $X_d$ is contained in one of the families
\textnumero 1, \textnumero 19, \textnumero 28, \textnumero 39, \textnumero 49, \textnumero 59, \textnumero 66, \textnumero 84,
then the septuple  $(a_1,a_{2},a_{3},a_{4},a_{5},d,I)$ is described in the following table below.

\begin{center}
\renewcommand\arraystretch{1.1}
\begin{tabular}{|c||c|c|c|c|c|c|c|}
\hline \textnumero & $a_1$ & $a_2$ & $a_3$ & $a_4$ & $a_5$ & $d$ & $I$\\
\hline\hline
1 & $1$ & $1$ & $1$ & $1$ & $1$ & $4$ & $1$\\
\hline
19 & $1$ & $2$ & $3$ & $3$ & $4$ & $12$ & $1$\\
\hline
28 & $1$ & $3$ & $3$ & $4$ & $5$ & $15$ & $1$ \\
\hline
39 & $1$ & $3$ & $4$ & $5$ & $6$ & $18$ & $1$\\
\hline
49 & $1$ & $3$ & $5$ & $6$ & $7$ & $21$ & $1$\\
\hline
59 & $1$ & $3$ & $6$ & $7$ & $8$ & $24$ & $1$\\
\hline
66 &  $1$ & $5$ & $6$ & $7$ & $9$ & $27$ & $1$\\
\hline
84 & $1$ & $7$ & $8$ & $9$ & $12$ & $36$ & $1$\\
\hline
\end{tabular}
\end{center}

\medskip

It would be interesting to study degree of irrationality of rationally connected threefolds in a more general setting.
If a rationally connected 3-fold is birational to a conic bundle,
its degree of irrationality is $1$ or $2$.
Similarly, if a rationally connected 3-fold is birational to a fibration into del Pezzo surfaces,
it is birational to an elliptic fibration with a section,
which implies that the degree of irrationality of the threefold is either $1$ or $2$.

\begin{proposition*}
Let $V$ be a smooth Fano 3-fold that is not a quartic 3-fold. Then $\mathrm{d}(V)$  is either $1$ or $2$.
\end{proposition*}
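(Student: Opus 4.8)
The plan is to combine the birational invariance of $\mathrm{d}(\,\cdot\,)$ with the two structural facts recorded immediately before the statement: a rationally connected threefold that is birational to a conic bundle, or to a fibration into del Pezzo surfaces, has degree of irrationality at most $2$. Since every smooth Fano threefold is rationally connected, it is enough to prove that every smooth Fano threefold other than the quartic is rational, or birational to a conic bundle, or birational to a del Pezzo fibration. To organise this I would appeal to the classification of smooth Fano threefolds: Iskovskikh's classification when the Picard number $\rho(V)$ equals $1$, and the Mori--Mukai classification when $\rho(V)\geqslant 2$.

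For $\rho(V)\geqslant 2$ I would run the minimal model program and induct on $\rho$. Choose an extremal contraction of $V$. If it is of fibre type it is a conic bundle over a rational surface or a del Pezzo fibration over $\mathbb{P}^1$, and the two quoted facts give $\mathrm{d}(V)\leqslant 2$ directly. If instead it is birational, it contracts a divisor onto a threefold $Y$ with terminal singularities that has strictly smaller Picard number and satisfies $\mathrm{d}(Y)=\mathrm{d}(V)$, so one continues the program on $Y$; reading off the Mori--Mukai tables, each such $V$ is therefore birational to a Mori fibre space that is a conic bundle, a del Pezzo fibration, or a Fano threefold of Picard number $1$. The key point is that this induction never reaches the quartic: by the birational rigidity of the quartic (Iskovskikh--Manin) no threefold of Picard number $\geqslant 2$ of the above type is birational to it, so the only way the quartic can occur is as $V$ itself.

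It then remains to run through the $\rho(V)=1$ families. The rational members --- projective space, the quadric, the del Pezzo threefolds of degree $4$ and $5$, and the prime Fano threefolds of genus $7,9,10,12$ --- have $\mathrm{d}(V)=1$. The double covers --- the sextic double solid, the quartic double solid, the double Veronese cone, and the genus-$3$ double cover of the quadric --- have $\mathrm{d}(V)=2$ via the covering involution, equivalently via the degree-two projection onto the base. The cubic threefold becomes a conic bundle after projection from a line, so $\mathrm{d}(V)=2$, and the genus-$8$ prime Fano threefold is birational to a cubic threefold and hence also has $\mathrm{d}(V)=2$. For the three remaining irrational prime Fano threefolds of genus $4,5,6$ one produces an explicit structure: for the genus-$4$ threefold, the complete intersection of a quadric and a cubic in $\mathbb{P}^5$, projection from a line on it is generically two-to-one onto $\mathbb{P}^3$, giving $\mathrm{d}(V)=2$; for genus $5$ and $6$ one exhibits a conic bundle or del Pezzo fibration structure, again by projecting from a line and resolving, which yields $\mathrm{d}(V)\leqslant 2$. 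The quartic itself has $\mathrm{d}(V)=3$ by the Main Theorem and is the single excluded family.

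The step I expect to be the main obstacle is the treatment of the irrational prime Fano threefolds of genus $4,5,6$: these are neither rational nor evidently double covers, so the conic bundle or del Pezzo fibration structure must be produced by hand through the geometry of their lines and linear projections, and one must check that the resulting base is rational. A secondary point that requires care is verifying that the quartic is genuinely the unique exception, which rests on its birational super-rigidity so that it never appears as a birational model in the Picard-number induction.
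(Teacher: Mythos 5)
Your high-level strategy --- birational invariance of $\mathrm{d}$, the two structural facts stated before the proposition, plus classification --- is the same as the paper's, but the paper's actual proof is far shorter: it invokes the classification-level fact that a smooth Fano threefold birational to neither a conic bundle nor an elliptic fibration with a section must be the quartic or the sextic double solid, and then handles the sextic double solid by its degree-two cover of $\mathbb{P}^3$. Your opening reduction, by contrast, is not correct as stated: it is \emph{not} enough to show that every non-quartic smooth Fano threefold is rational, or birational to a conic bundle, or birational to a del Pezzo fibration, because that statement is false --- the sextic double solid is birationally superrigid (Iskovskikh), so it is none of the three. Your case analysis silently repairs this by treating the double covers through their covering involutions, which is exactly the extra mechanism (degree-two maps onto rational threefolds) that your reduction omits; but the statement you claim to reduce to cannot be proved.

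The substantive gaps are where you predicted, in genera $5$ and $6$, and they are real. Genus $4$ is fine: for $X_{2,3}=Q\cap C\subset\mathbb{P}^5$ and a line $L\subset X$, a plane $P\supset L$ meets $Q$ in $L\cup L'$, and $L'\cap C$ is three points one of which is $L\cap L'$, so projection from $L$ is generically two-to-one onto $\mathbb{P}^3$. But for $X_8=Q_1\cap Q_2\cap Q_3\subset\mathbb{P}^6$ projection from a line fails: a plane $P\supset L$ meets each $Q_i$ in $L\cup M_i$, and three lines $M_1,M_2,M_3$ in $P\cong\mathbb{P}^2$ generically have empty common intersection, so the projection is not even dominant onto $\mathbb{P}^4$ --- it maps $X_8$ birationally onto a quartic hypersurface there. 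What works is projection from a \emph{conic} $c\subset X_8$: for a $3$-plane $M$ containing the plane of $c$ one has $M\cap Q_1\cap Q_2=c\cup D$ with $D$ a conic meeting $c$ in two points, and $D\cap Q_3$ consists of those two points plus two more, so this projection is generically two-to-one onto $\mathbb{P}^3$. For genus $6$ no projection from a line helps at all: by Logachev and Debarre--Iliev--Manivel, (double) projection from a line on $X_{10}$ yields a birational map to another genus-$6$ threefold (the line transform), not a conic bundle, del Pezzo fibration, or two-to-one map, and the known proofs that $\mathrm{d}(X_{10})\leqslant 2$ require genuinely harder input --- either a conic-bundle structure obtained by projecting from the span of a suitable higher-degree curve on $X_{10}$, or the Gushel--Mukai birationality results showing that an ordinary $X_{10}$ is birational to a special one, i.e.\ to a double cover of the quintic del Pezzo threefold. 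Finally, a smaller but genuine gap in your Picard-rank induction: a divisorial contraction from a smooth Fano threefold may have a singular target, and the Mori--Mukai tables say nothing about such a $Y$; the induction must instead be organised through the structure of the Mori--Mukai classification itself (every family with $\rho\geqslant 2$ either admits a fibre-type contraction or is the blow-up of a smooth Fano threefold along a smooth centre).
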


\begin{proof}
We may assume that $V$ is not birational neither to a conic bundle
nor to an elliptic fibration with a section.
Then $V$ is sextic double solid. This gives $\mathrm{d}(V)\leqslant 2$.
\end{proof}

Applying Minimal Model Program, we see that every rationally connected threefold whose degree of irrationality is larger than $2$
is birational to a Fano 3-fold with terminal $\mathbb{Q}$-factorial singularities that has Picard rank $1$,
which is birational neither to a conic bundle nor to a fibration into del Pezzo surfaces.
Such a Fano 3-fold is  said to be  \emph{solid}~(\cite{AO2018}).
It would be interesting to study the degree of irrationality of solid Fano 3-folds.

Let us describe the structure of this paper.
In Sections~\ref{section:1}, \ref{section:19-28}, and \ref{section:the other}, we study weighted Fano 3-fold hypersurfaces in the deformation families
\textnumero 1, \textnumero 19, \textnumero 28, \textnumero 39, \textnumero 49, \textnumero 59, \textnumero 66, \textnumero 84.
Then, in Section~\ref{section:main}, we prove out Main Theorem.

From now on, we denote by $\mathrm{Aut}(X)$  the group of automorphisms of a threefold $X$ and by $\mathrm{Bir}(X)$ the group of  birational automorphisms of $X$.
We use coordinates $x$, $y$, $z$, $t$, $w$ for $\mathbb{P}(a_1,a_{2},a_{3},a_{4},a_{5})$ with $\mathrm{wt}(x)=a_1$, $\mathrm{wt}(y)=a_2$, $\mathrm{wt}(z)=a_3$, $\mathrm{wt}(t)=a_4$, $\mathrm{wt}(w)=a_5$.

\medskip

\textbf{Acknowledgements.}
Cheltsov has been supported by EPSRC Grant EP/V054597/1. Park has been supported by IBS-R003-D1, Institute for Basic Science in Korea.

\section{Family \textnumero 1}
\label{section:1}

Let $X_4$ be a smooth quartic 3-fold in $\mathbb{P}^4$.
Then $X_4$ is irrational (\cite{IsMa71}).
Thus, considering a projection  $X_4\dasharrow\mathbb{P}^{3}$ from a point in $X_4$, we get  $\mathrm{d}(V)\in\{2,3\}$.

\begin{lemma}
\label{lemma:1} If $X_4$ is general, then $\mathrm{d}(X_4)=3$.
\end{lemma}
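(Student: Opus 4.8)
The plan is to show that a general smooth quartic threefold $X_4\subset\mathbb P^4$ satisfies $\mathrm d(X_4)=3$. By the discussion in the excerpt we already know $\mathrm d(X_4)\in\{2,3\}$ (projection from a point gives a degree-$3$ map, and irrationality of $X_4$ rules out $\mathrm d=1$), so the entire content of the lemma is to exclude $\mathrm d(X_4)=2$. Thus I must prove that a general quartic threefold admits no dominant rational map $\phi\colon X_4\dashrightarrow\mathbb P^3$ of degree $2$.

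Let me think about the structure of a hypothetical degree-$2$ map. If $\phi\colon X_4\dashrightarrow\mathbb P^3$ were generically two-to-one, it would induce a generically $2{:}1$ correspondence, hence a rational involution $\tau\in\mathrm{Bir}(X_4)$ (the deck transformation, well-defined on a general fibre of $\phi$ because the generic fibre of a degree-$2$ map is a length-$2$ scheme permuted by a unique involution). So the heart of the matter is the birational automorphism group $\mathrm{Bir}(X_4)$. The key input I would invoke is the birational superrigidity of smooth quartic threefolds, due to Iskovskikh--Manin: for a general (indeed, for every smooth) quartic threefold $X_4$, one has $\mathrm{Bir}(X_4)=\mathrm{Aut}(X_4)$, and for general $X_4$ the automorphism group is trivial. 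Consequently any $\tau\in\mathrm{Bir}(X_4)$ is biregular, so a degree-$2$ deck involution would have to be a biregular involution of $X_4$.

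First I would make precise that a degree-$2$ dominant rational map to $\mathbb P^3$ yields a nontrivial biregular involution. Resolving the indeterminacy of $\phi$ and taking the Stein-type factorization of the resulting generically finite morphism, the Galois closure of the degree-$2$ extension $\mathbb C(X_4)/\phi^*\mathbb C(\mathbb P^3)$ is the extension itself, and its nontrivial automorphism is an involution $\tau$ of the function field $\mathbb C(X_4)$, i.e. $\tau\in\mathrm{Bir}(X_4)$; moreover $\tau\neq\mathrm{id}$ since the map is genuinely two-to-one. By superrigidity $\tau$ extends to $\mathrm{Aut}(X_4)$, and a general quartic has $\mathrm{Aut}(X_4)=\{1\}$, forcing $\tau=\mathrm{id}$, a contradiction. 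Hence $\mathrm d(X_4)\neq 2$, and combined with $\mathrm d(X_4)\in\{2,3\}$ this gives $\mathrm d(X_4)=3$.

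The main obstacle I anticipate is not the invocation of superrigidity but the clean reduction from "degree-$2$ map to $\mathbb P^3$" to "biregular involution." One must argue that the deck involution of the generic fibre is defined over the base field and extends to a birational self-map of $X_4$, rather than only to a correspondence on some cover; the cleanest route is to observe that $\mathbb C(X_4)$ is a degree-$2$ (hence automatically Galois, since we are in characteristic $0$ and the extension is separable of degree $2$) extension of the subfield $\phi^*\mathbb C(\mathbb P^3)$, and that the nontrivial element of this Galois group is exactly the sought involution. I would also remark that the genericity hypothesis is used precisely to guarantee $\mathrm{Aut}(X_4)=\{1\}$; for special quartics with extra involutions the argument could in principle break, which is consistent with the fact that the lemma is stated only for general $X_4$ (and matches the Main Theorem asserting $\mathrm d=3$ only for general members of family \textnumero 1).
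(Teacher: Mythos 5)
Your proposal is correct and follows essentially the same route as the paper: assume $\mathrm{d}(X_4)=2$, extract the deck involution $\tau\in\mathrm{Bir}(X_4)$ from the degree-$2$ map, use the Iskovskikh--Manin equality $\mathrm{Bir}(X_4)=\mathrm{Aut}(X_4)$ to make $\tau$ biregular, and contradict the triviality of $\mathrm{Aut}(X_4)$ for general $X_4$. The only difference is that you spell out (via the automatic Galois property of a degree-$2$ field extension in characteristic $0$) the step the paper leaves implicit, namely that a generically two-to-one map genuinely yields a nontrivial birational involution.
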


\begin{proof}
Suppose that $\mathrm{d}(X_4)=2$.
Then there exists a rational map $X_4\dasharrow\mathbb{P}^3$ that is generically two-to-one.
This map induces a birational involution $\tau$ of $X_4$. The involution~$\tau$ is biregular since
$$
\mathrm{Bir}(X_4)=\mathrm{Aut}(X_4).
$$ by \cite{IsMa71}. However, if $X_4$ is general, the group $\mathrm{Aut}(X_4)$ is trivial (\cite{MaMon64}).
\end{proof}

We expect that $\mathrm{d}(X_4)=3$.
Indeed, suppose $\mathrm{d}(X_4)=2$.
As in the proof of Lemma~\ref{lemma:1}, we obtain a biregular involution $\tau$ of $X_4$
such that the quotient $Y=X_4\slash\tau$ is rational.
Then the fixed locus of the involution $\tau$ is either
\begin{itemize}
\item  a smooth quartic surface,
\item  or a disjoint union of a smooth plane quartic curve and four different points.
\end{itemize}
In the former case, the threefold $Y$ is a double cover of $\mathbb{P}^3$ ramified along a smooth quartic,
which is irrational (\cite{Vo88}).
Therefore, we conclude that $\tau$ fixes disjoint union of smooth plane quartic curve and four points.
Now, changing coordinates on $\mathbb{P}^4$, we may assume that
$$
X_4=\big\{h_{4}(x,y,z)+t^{2}a_{2}(x,y,z)+twb_{2}(x,y,z)+w^{2}c_{2}(x,y,z)+g_{4}(t,w)=0\big\}\subset\mathbb{P}^4
$$
while $\tau$ acts as
$$
\tau(x:y:z:t:w)=(x:y:z:-t:-w),
$$
where
$h_{i}$, $a_{i}$, $b_{i}$, $c_{i}$, $g_{i}$ are homogeneous polynomials of degree $i$.
Then $\tau$ fixes the smooth
irreducible curve
$$
C=\big\{h_{4}(x,y,z)=0,t=0,w=0\big\}\subset\mathbb{P}^4
$$
and four points $O_{1}, O_{2}, O_{3}$, $O_{4}$ that are cut out on $X_4$ by $x=y=z=0$.

Let $\eta\colon X_4\to Y$ be the quotient map.
Then $Y$ is singular along the smooth curve $\eta(C)$,
and $Y$ has $4$ singular points
$\eta(O_{1})$, $\eta(O_{2})$, $\eta(O_{3})$, and $\eta(O_{4})$,
which are cyclic quotient singular points of type ${\frac{1}{2}}(1,1,1)$.
Let $\mathcal{H}$ be the pencil on $Y$ generated by the images of the surfaces cut out on $X_4$ by $t=0$ and $w=0$, respectively.
Then we have the following commutative diagram:
$$
\xymatrix{
&V\ar@{->}[ld]_{\pi}\ar@{->}[rd]^{\phi}&\\%
Y\ar@{-->}[rr]&&\mathbb{P}^1}
$$
where $\pi$ is the blow up of the curve $\eta(C)$,
the map $Y\dasharrow\mathbb{P}^1$ is given by the~pencil $\mathcal{H}$,
and~$\phi$ is a Mori fibre space whose general fiber is a smooth del Pezzo surfaces of degree~$2$.
Now, one can try to apply the technique from \cite{AbbanKrylov,Krylov,Okada} to show that $Y$ is irrational.

\section{Families \textnumero 19 and \textnumero 28}
\label{section:19-28}

Before we proceed, let us introduce a lemma which is useful in handling automorphisms for Families \textnumero 19 and \textnumero 28.
\begin{lemma}\label{lemma:5-point}
Let $\Sigma$ be a subset in $\mathbb{P}^1$ that consists of $5$ distinct points in general position.
Then the group of automorphisms of the pair $(\mathbb{P}^1, \Sigma)$ is trivial.
\end{lemma}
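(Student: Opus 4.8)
The plan is to reduce the statement to a dimension count on the moduli of five points. Recall that $\mathrm{Aut}(\mathbb{P}^1)=\mathrm{PGL}_2(\mathbb{C})$ acts sharply $3$-transitively, so any automorphism of $\mathbb{P}^1$ fixing three distinct points is the identity. First I would observe that every automorphism preserving $\Sigma$ permutes its five points, giving a restriction homomorphism $\mathrm{Aut}(\mathbb{P}^1,\Sigma)\to\mathrm{Sym}(\Sigma)\cong S_5$. This homomorphism is injective: if $g$ induces the trivial permutation, then $g$ fixes all five points of $\Sigma$, in particular three of them, hence $g=\mathrm{id}$. Thus $\mathrm{Aut}(\mathbb{P}^1,\Sigma)$ is a finite group, and it suffices to rule out the existence of a single nontrivial element $g$.

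Next I would pin down the possible order $n$ of such a $g$. Since $g$ has finite order, it is diagonalizable (a non-diagonalizable element of $\mathrm{PGL}_2(\mathbb{C})$ has infinite order), hence conjugate to $z\mapsto\zeta z$ for a primitive $n$th root of unity $\zeta$ with $n\geqslant 2$; in particular $g$ has exactly two fixed points on $\mathbb{P}^1$, and every other point has a $\langle g\rangle$-orbit of size exactly $n$. Writing $\Sigma=F\sqcup(\text{union of free orbits})$, where $F=\Sigma\cap\mathrm{Fix}(g)$, we get $|F|\leqslant 2$ and $5-|F|\equiv 0\pmod n$. Since $5-|F|\in\{3,4,5\}$, the only possibilities are $(n,|F|)\in\{(2,1),(3,2),(4,1),(5,0)\}$; in particular $n\in\{2,3,4,5\}$.

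Finally I would argue that each of these cases confines $\Sigma$ to a proper closed subset of the two-dimensional moduli space of five distinct points on $\mathbb{P}^1$. After conjugating the two fixed points of $g$ to $0$ and $\infty$ and using the residual scaling $z\mapsto sz$ that commutes with $g$ to normalize one free orbit, one checks that the cases $n=5$, $n=4$, and $n=3$ each pin $\Sigma$ down to finitely many configurations (for instance $\Sigma=\{0,\infty,1,\omega,\omega^2\}$ with $\omega=\zeta_3$ when $n=3$), while the case $n=2$ gives $\Sigma=\{0,1,-1,b,-b\}$ with $0$ the unique point of $\Sigma$ fixed by $g$ and $b$ a free parameter. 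In every case the locus of $g$-invariant configurations has dimension at most $1$, hence is a proper closed subvariety of the moduli space. As the embedding into $S_5$ leaves only finitely many permutation types to consider, the union of these loci is still a proper closed subvariety, and a configuration of five points in general position lies outside it; for such $\Sigma$ no nontrivial $g$ exists, so $\mathrm{Aut}(\mathbb{P}^1,\Sigma)$ is trivial.

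The step I expect to be the main obstacle is verifying that the $n=2$ locus is genuinely one-dimensional, and hence proper: it is the largest of the four cases and the only one in which $\Sigma$ moves in a positive-dimensional family, the others being visibly more special. This step also shows that some genericity hypothesis is essential, since for the special configurations $\{0,1,-1,b,-b\}$ the involution $z\mapsto -z$ does preserve $\Sigma$; thus the word \emph{general} cannot be dropped from the statement.
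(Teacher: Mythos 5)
Your argument is correct, but it is genuinely different from what the paper does: the paper does not prove the lemma at all, it simply records it as a well-known fact with references to Dolgachev--Iskovskikh and du Plessis--Wall on binary quintics and finite subgroups of the plane Cremona group. Your proof is a self-contained replacement: injectivity of $\mathrm{Aut}(\mathbb{P}^1,\Sigma)\to S_5$ via sharp $3$-transitivity, the classification of finite-order elements of $\mathrm{PGL}_2(\mathbb{C})$ as scalings $z\mapsto\zeta z$ with exactly two fixed points, the orbit count $(n,|F|)\in\{(2,1),(3,2),(4,1),(5,0)\}$, and the dimension count in the $2$-dimensional moduli space of five points (the $n=3,4,5$ loci are finite, the $n=2$ locus $\{0,1,-1,b,-b\}$ is $1$-dimensional) are all accurate, and the finiteness of the cases makes the union of invariant loci a proper closed subset, which is exactly what ``general position'' must exclude. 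What your route buys is transparency: it makes explicit which special configurations must be avoided (notably the one-parameter family $\{0,1,-1,b,-b\}$ fixed by $z\mapsto -z$), confirming that the genericity hypothesis cannot be dropped --- a point the paper's citation leaves implicit; what the paper's route buys is brevity and an appeal to a standard classification that also covers the precise meaning of genericity used in the cited sources. One small polish: the invariant loci you describe are a priori constructible images, so strictly one should pass to their Zariski closures (still of dimension at most $1$) before calling the union a proper closed subvariety; this does not affect the conclusion.
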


\begin{proof}
This is a well-known fact. See, for example, \cite[\S~6.4]{DolgachevIskovskikh} or \cite{duPlessisWall}.
\end{proof}

Let $Y$ be a general quasismooth hypersurface in the family  \textnumero 19.
Then $Y$  is given  by quasihomogeneous equation of degree $12$
$$
f:=w^3+w^2f_{4}(x,y,z,t)+wf_{8}(x,y,z,t)+f_{12}(x,y,z,t)=0
$$
in  $\mathbb{P}(1,2,3,3,4)$,
where each $f_k$ is a quasihomogeneous polynomial of degree $k$ in $x, y, z, t$.
We now change the coordinates in the following way:
\begin{enumerate}
\item by using the following type of  coordinate change   $$w\mapsto w+*y^2+*yx^2+*xz+*xt,$$
we may assume that $f$ has $wy^4$ but none of $y^6$,  $y^5x^2$, $y^4xz$, $y^4xt$;
\item   by using the following type of  coordinate change   $$y\mapsto y+*x^2,$$
we may assume that  $f$ does not have  $wy^3x^2$;
\item  by using the following type of  coordinate change   $$z\mapsto z+*xy+*x^3,$$
we may assume  that $f$ has  neither $t^3xy$ nor $t^3x^3$;
\item  by using the following type of  coordinate change   $$t\mapsto t+*xy+*x^3,$$
we may assume  that $f$ has  neither $z^3xy$ nor $z^3x^3$,
\end{enumerate}
where $*$ means an appropriate constant.
Note that our choice of coordinates implies that the sections by $w=0$ and $y=0$ are preserved by $\mathrm{Aut}(Y)$.
Furthermore, the section by $x=0$  is also preserved by $\mathrm{Aut}(Y)$, since the $0$-dimensional space $|\mathcal{O}_{Y}(1)|$ is invariant.
These imply that all members of $\mathrm{Aut}(Y)$ are given by
$$
[x:y:z:t;w]\mapsto [x: A_1y:   \alpha_1z+ \alpha_2t: \alpha_3z+ \alpha_4t:A_2w],
$$
where $A_1$ and $A_2$ are non-zero constants,  and $\alpha_i$'s  are constants such that the matrix
\[
\begin{pmatrix}
\alpha_1 & \alpha_2  \\
\alpha_3 & \alpha_4
\end{pmatrix}\]
is invertible. Observe also that
$$
f_{12}(x,y,z,t)=y^3g_{6}(z,t)+y^2h_8(x,z,t)+yh_{10}(x,z,t)+h_{12}(x,z,t),
$$
where $g_6$ is a quasihomogeneous polynomial of degree $6$ in $z,t$,
and $h_k$'s are quasihomogeneous polynomials of degree $k$ in $x,z,t$.
The hypersurface $Y$ has four distinct points
$$
g_{12}(z,t)=0
$$
on the curve $C$ defined by $x=y=w=0$.
Moreover, since $Y$ is general, we may assume that $g_{6}(z,t)=0$ gives two distinct points on the curve $C$,
and hence
$$
g_{12}(z,t)g_6(z,t)=0
$$
defines $6$ distinct points $\Sigma$ in $C\cong \mathbb{P}^1$ which are in general position.
Every automorphism of $Y$ induces an automorphism of $C$ that leaves $\Sigma$ invariant.
Then Lemma~\ref{lemma:5-point} implies that $\alpha_2=\alpha_3=0$ and $\alpha_1=\alpha_4$.
Consequently,  every automorphism of $Y$ must be induced by
$$
[x:y:z:t:w]\mapsto [x: a_1y: a_2z: a_2t:a_3w]
$$
where $a_i$'s are non-zero constants.

Let $W$ be a general quasismooth hypersurface in the family  \textnumero 28.
The hypersurface $W$  is given  by quasihomogeneous equation of degree $15$
$$
w^3+w^2f_{5}(x,y,z,t)+wf_{10}(x,y,z,t)+f_{15}(x,y,z,t)=0
$$
in  $\mathbb{P}(1,3,3,4,5)$,
where $f_k(x,y,z,t)$ is a quasihomogeneous polynomial of degree $k$ in $x, y, z, t$.
By using the coordinate change
$$w\mapsto w+\frac{1}{3}f_{5}(x,y,z,t)$$
we may assume that $W$ is defined by quasihomogeneous equation of degree $15$
$$
w^3+wf_{10}(x,y,z,t)+f_{15}(x,y,z,t)=0
$$
in  $\mathbb{P}(1,3,3,4,5)$.
Since $W$ is quasismooth, we may further change coordinates $z$ and $y$ such that $f_{15}$ has $t^3z$ but not~$t^3y$.
Then, we change the coordinates in the following way:
\begin{enumerate}
\item use the following type of  coordinate change   $$z\mapsto z+*x^3$$
 in such a way that  $f_{15}$ does not have $t^3x^3$;
\item use the following type of  coordinate change   $$y\mapsto y+*x^3$$
 in such a way that  $f_{15}$ has $y^5$ but not $y^4x^3$;
\item  use the following type of  coordinate change   $$t\mapsto t+*zx+*yx+*x^4$$
 in such a way that none of $t^2z^2x$, $t^2zyx$, $t^2zx^4$ appear in $f_{15}$,
\end{enumerate}
where $*$ means a suitable constant.
As above, we see that each member in $\mathrm{Aut}(W)$ is given by
$$
[x:y:z:t;w]\mapsto [x: \alpha_1y+ \alpha_2z:  \alpha_3y+ \alpha_4z:A_1t:A_2w]
$$
where $A_i$'s are non-zero constants, and $\alpha_i$'s  are constants such that the matrix
\[
\begin{pmatrix}
\alpha_1 & \alpha_2  \\
\alpha_3 & \alpha_4
\end{pmatrix}\]
is invertible. Observe that the hypersurface $W$ has five distinct points on the curve $C$ defined by $x=t=w=0$,
and every automorphism of $W$ induces an automorphism of $C$ that permutes these five points.
Since $W$ is general, we may assume that these points are in general position.
Thus, applying Lemma~\ref{lemma:5-point}, we see that $\alpha_2=\alpha_3=0$ and $\alpha_1=\alpha_4$.
Consequently,  every automorphism of $W$ must be induced by
$$
[x:y:z:t:w]\mapsto [x:a_1y:a_1z:a_3t:a_4w]
$$
where $a_i$'s are non-zero constants.

The following table shows all monomials that appear in the quasihomogenous polynomial for each of the families \textnumero 19 and \textnumero 28 after required coordinate changes.

\begin{center}
\begin{longtable}{|c||c|c|}
\hline \textnumero & $d$ & monomials of degree $d$ \\
\hline\hline
19 &
  \begin{minipage}[m]{.03\linewidth}\setlength{\unitlength}{.20mm}
  \smallskip
  \begin{center}
$12$
\end{center}
\smallskip
\end{minipage}
&
  \begin{minipage}[m]{.87\linewidth}\setlength{\unitlength}{.20mm}
  \smallskip
  \begin{center}
$w^{3}$, $zt^{3}$, $z^{2}t^{2}$, $z^{3}t$, $yt^{2}w$, $yztw$, $yz^{2}w$,  $y^2w^2$, $y^{3}t^{2}$, $y^{3}zt$, $y^{3}z^{2}$, $y^{4}w$,
 $xyzt^{2}$, $xyz^{2}t$,  $xy^{2}tw$, $xy^{2}zw$, $xzw^2$, $xtw^2$,
 $x^{2}t^{2}w$, $x^{2}ztw$, $x^{2}z^{2}w$,  $x^{2}y^{2}t^{2}$, $x^{2}y^{2}zt$, $x^{2}y^{2}z^{2}$,
$x^{3}zt^{2}$, $x^{3}z^{2}t$, $x^{3}ytw$, $x^{3}yzw$, $x^{3}y^{3}t$, $x^{3}y^{3}z$,
 $x^{4}yt^{2}$, $x^{4}yzt$, $x^{4}yz^{2}$, $x^{4}y^{2}w$, $x^{4}y^{4}$, $x^4w^2$, $x^{5}tw$, $x^{5}zw$, $x^{5}y^{2}t$, $x^{5}y^{2}z$,
 $x^{6}t^{2}$, $x^{6}zt$, $x^{6}z^{2}$, $x^{6}yw$, $x^{6}y^{3}$, $x^{7}yt$, $x^{7}yz$, $x^{8}w$,
$x^{8}y^{2}$, $x^{9}t$, $x^{9}z$, $x^{10}y$, $x^{12}$
\end{center}
\smallskip
\end{minipage}\\
\hline
28 &
 \begin{minipage}[m]{.03\linewidth}\setlength{\unitlength}{.20mm}
  \smallskip
  \begin{center}
$15$
\end{center}
\smallskip
\end{minipage}
&
  \begin{minipage}[m]{.87\linewidth}\setlength{\unitlength}{.20mm}
  \smallskip
  \begin{center}
 $w^{3}$, $zt^{3}$, $z^{2}tw$,  $yt^{3}$, $yztw$, $yz^{4}$, $y^{2}tw$, $y^{2}z^{3}$, $y^{3}z^{2}$, $y^{4}z$,
 $xz^{3}w$, $xyz^{2}w$, $xy^{2}t^{2}$, $xy^{2}zw$, $xy^{3}w$,
$x^{2}t^{2}w$, $x^{2}z^{3}t$,  $x^{2}yz^{2}t$, $x^{2}y^{2}zt$, $x^{2}y^{3}t$,
 $x^{3}ztw$,  $x^{3}ytw$, $x^{3}yz^{3}$, $x^{3}y^{2}z^{2}$, $x^{3}y^{3}z$,
 $x^{4}z^{2}w$, $x^{4}yt^{2}$, $x^{4}yzw$, $x^{4}y^{2}w$,  $x^{5}z^{2}t$, $x^{5}yzt$, $x^{5}y^{2}t$,
 $x^{6}tw$, $x^{6}z^{3}$, $x^{6}yz^{2}$, $x^{6}y^{2}z$, $x^{6}y^{3}$, $x^{7}t^{2}$, $x^{7}zw$, $x^{7}yw$, $x^{8}zt$, $x^{8}yt$,
 $x^{9}z^{2}$, $x^{9}yz$, $x^{9}y^{2}$, $x^{10}w$, $x^{11}t$, $x^{12}z$, $x^{12}y$, $x^{15}$
   \end{center}
\smallskip
\end{minipage}\\
\hline
\end{longtable}
\end{center}
Using this, it is not difficult to see that the automorphism group of a general hypersurface in each of the families \textnumero 19 and \textnumero 28 is trivial.
In particular, it contains no involutions.

Suppose that for a general hypersurface $X$ in the families \textnumero 19 and \textnumero 28, $\mathrm{d}(X)=2$. Then $\mathrm{Bir}(X)$ has an involution.
However, since  $X$ is birationally super-rigid (\cite{CheltsovPark2017}), we have
$$
\mathrm{Bir}(X)=\mathrm{Aut}(X),
$$
and hence  $\mathrm{Aut}(X)$ has an involution. This is a contradiction. Therefore, $\mathrm{d}(X)=3$.

\section{Families \textnumero 39, \textnumero 49, \textnumero 59, \textnumero 66, \textnumero 84}
\label{section:the other}

Let $X_{d}$ be a quasismooth and well-formed hypersurface of degree $d$ in  the weighted projective space $\mathbb{P}(a_1,a_{2},a_{3},a_{4},a_{5})$.
Suppose that $X_d$ belongs to one of the families \textnumero 39, \textnumero 49, \textnumero 59, \textnumero 66, \textnumero 84. Since the hypersurface $X_d$ is birationally super-rigid (\cite{CheltsovPark2017}), we have
$$
\mathrm{Bir}(X_d)=\mathrm{Aut}(X_d).
$$
It follows from the same argument in Section~\ref{section:19-28} that in order to have $\mathrm{d}(X_d)=3$, it is enough to show that
$\mathrm{Aut}(X_d)$ is trivial.

We have $d=3a_5$ and the hypersurface is defined by quasihomogeneous equation
\[f(x,y,z,t, w):=w^3+w^2f_{a_5}(x,y,z,t)+wf_{2a_5}(x,y,z,t)+f_{d}(x,y,z,t)=0,\]
where $f_k(x,y,z,t)$ is a quasihomogeneous polynomial of degree $k$ in $x, y, z, t$.

By using the coordinate change $$w\mapsto w+\frac{1}{3}f_{a_5}(x,y,z,t),$$ we may assume that $f_{a_5}(x,y,z,t)=0$, that is, the hypersurface is
 is defined by quasihomogeneous equation
 \begin{equation}
 \label{eq:w}
w^3+wf_{2a_5}(x,y,z,t)+f_{d}(x,y,z,t)=0.
\end{equation}

The section by $x=0$  is preserved by $\mathrm{Aut}(X_d)$ since the $0$-dimensional space $|\mathcal{O}_{X_d}(1)|$ is invariant.
Due to the assumption \eqref{eq:w}, the section by $w=0$ is also invariant under the action of  $\mathrm{Aut}(X_d)$.

We now suppose that $X_d$ is general.
Using the monomials in the first column, we perform appropriate coordinate changes described in the third column of Table~\ref{eq:yzt} so that
we might assume that  the monomials in the second column do not appear in the defining equation \eqref{eq:w}.
For instance, if $f= \cdots+at^3y+bt^3x^3+\cdots$ in  the first line of~\textnumero 39, where $a\ne 0$, $b$ are constants, then we perform the coordinate change  $$y\mapsto y+\frac{b}{a}x^3$$ so that we could remove the monomial $t^3x^3$ from $f$.  If $f= \cdots+at^3y+bt^2yzx+ct^2y^2x^2+dt^2yx^5+\cdots$ in  the second line of \textnumero 39, where $a\ne 0$, $b$, $c$, $d$ are constants, then we use the coordinate change
$$t\mapsto t+\frac{1}{3a}\left(bxz+cx^2y+dx^5\right)$$ to get rid of the monomials $t^2yzx$, $t^2y^2x^2$,  $t^2yx^5$ from $f$.

\begin{equation}\label{eq:yzt}
\renewcommand\arraystretch{1.4}
\begin{tabular}{|c||c|c|c|}
\hline \textnumero & $f$ contains & $f$ does not contain  & coordinate changes\\
\hline\hline
39 &  \begin{minipage}[m]{.10\linewidth}\setlength{\unitlength}{.20mm}
  \smallskip
  \begin{center}
$t^3y$

$t^3y$

$z^3w$
\end{center}
\smallskip
\end{minipage}
&
\ \
  \begin{minipage}[l]{.22\linewidth}\setlength{\unitlength}{.20mm}
  \smallskip
  $t^3x^3$

  $t^2yzx$, $t^2y^2x^2$,  $t^2yx^5$

  $z^2wyx$, $z^2wx^4$
\smallskip
\end{minipage}
&
\ \
  \begin{minipage}[l]{.35\linewidth}\setlength{\unitlength}{.20mm}
  \smallskip
  $y\mapsto y+*x^3$

  $t\mapsto t+*xz+*x^2y+*x^5$

  $z\mapsto z+*xy+*x^4$

\smallskip
\end{minipage}\\
\hline
49 &  \begin{minipage}[m]{.10\linewidth}\setlength{\unitlength}{.20mm}
  \smallskip
  \begin{center}
$t^3y$

$y^5t$

$z^4x$
\end{center}
\smallskip
\end{minipage}
&
\ \
  \begin{minipage}[m]{.22\linewidth}\setlength{\unitlength}{.20mm}
  \smallskip
  $t^3x^3$

  $y^5xz$, $y^5x^6$, $y^6x^3$, $y^7$

  $z^3x^3y$,  $z^3x^6$,
\smallskip
\end{minipage}
&
\ \
  \begin{minipage}[l]{.35\linewidth}\setlength{\unitlength}{.20mm}
  \smallskip
   $y\mapsto y+*x^3$

  $t\mapsto t+*xz+*x^3y+*x^6+*y^2$

  $z\mapsto z+*x^2y+*x^5$
\smallskip
\end{minipage}\\
\hline
59 &  \begin{minipage}[m]{.10\linewidth}\setlength{\unitlength}{.20mm}
  \smallskip
  \begin{center}
$t^3y$

$t^3y$

$y^6z$
\end{center}
\smallskip
\end{minipage}
&
\ \
  \begin{minipage}[m]{.22\linewidth}\setlength{\unitlength}{.20mm}
  \smallskip
  $t^3x^3$

  $t^2y^3x$, $t^2yxz$, $t^2yx^7$

  $y^6x^6$, $y^7x^3$, $y^8$
\smallskip
\end{minipage}
&
\ \
  \begin{minipage}[l]{.35\linewidth}\setlength{\unitlength}{.20mm}
  \smallskip
   $y\mapsto y+*x^3$

  $t\mapsto t+*xy^2+*xz+*x^7$

  $z\mapsto z+*y^2+*yx^3+x^6$

\smallskip
\end{minipage}\\
\hline
66 &   \begin{minipage}[m]{.10\linewidth}\setlength{\unitlength}{.20mm}
  \smallskip
  \begin{center}
$t^3z$

$y^4t$

$y^4t$
\end{center}
\smallskip
\end{minipage}
&
\ \
  \begin{minipage}[l]{.22\linewidth}\setlength{\unitlength}{.20mm}
  \smallskip
  $t^3xy$, $t^3x^6$

  $y^4xz$, $y^4x^7$,  $y^5x^2$

  $y^3tx^5$
\smallskip
\end{minipage}
&
\ \
  \begin{minipage}[l]{.35\linewidth}\setlength{\unitlength}{.20mm}
  \smallskip
   $z\mapsto z+*xy+*x^6$

  $t\mapsto t+*xz+*x^2y+*x^7$

  $y\mapsto y+*x^5$

\smallskip
\end{minipage}\\
\hline
84 &  \begin{minipage}[m]{.10\linewidth}\setlength{\unitlength}{.20mm}
  \smallskip
  \begin{center}
$y^4z$

$y^4z$

$t^4$
\end{center}
\smallskip
\end{minipage}
&
\ \
  \begin{minipage}[l]{.22\linewidth}\setlength{\unitlength}{.20mm}
  \smallskip
   $y^4x^8$, $y^5x$

   $y^3zx^7$

   $t^3xz$, $t^3x^2y$, $t^3x^9$
\smallskip
\end{minipage}
&
\ \
  \begin{minipage}[l]{.35\linewidth}\setlength{\unitlength}{.20mm}
  \smallskip
  $z\mapsto z+*xy+*x^8$

  $y\mapsto y+*x^7$

  $t\mapsto t+*xz+*x^2y+*x^9$

\smallskip
\end{minipage}\\
\hline
\end{tabular}
\end{equation}
where $*$ means an appropriate constant.
It then follows  that $\mathrm{Aut}(X_d)$  fixes the sections defined by
$y=0$, by $z=0$, and  by $t=0$.
Consequently,
all members of $\mathrm{Aut}(X_d)$ are given by
$$
[x:y:z:t:w]\mapsto [x:a_1y:a_2z:a_3t:a_4w]
$$
where $a_i$'s are non-zero constants.

The following table shows all the monomials that appear in the quasihomogenous polynomial  \eqref{eq:w}
for each of the families \textnumero 39, \textnumero 49, \textnumero 59, \textnumero  66, \textnumero  84.
Note that all the assumptions of Table~\ref{eq:yzt} for  \eqref{eq:w} are applied to the table below.

\begin{longtable}{|c||c|c|}
\hline \textnumero & $d$ & monomials of degree $d$ \\
\hline\hline
39 &  \begin{minipage}[m]{.03\linewidth}\setlength{\unitlength}{.20mm}
  \smallskip
  \begin{center}
$18$
\end{center}
\smallskip
\end{minipage}
&
  \begin{minipage}[m]{.87\linewidth}\setlength{\unitlength}{.20mm}
  \smallskip
  \begin{center}
$w^{3}$, $z^{2}t^{2}$, $z^{3}w$, $yt^{3}$, $yztw$,  $y^{2}z^{3}$, $y^{3}zt$, $y^{4}w$, $y^{6}$,
 $xz^{3}t$, $xy^{2}tw$, $xy^{3}z^{2}$, $xy^{4}t$,
$x^{2}t^{2}w$,  $x^{2}z^{4}$, $x^{2}yz^{2}t$,  $x^{2}y^{2}zw$, $x^{2}y^{4}z$,
$x^{3}ztw$,  $x^{3}yz^{3}$, $x^{3}y^{2}zt$, $x^{3}y^{3}w$, $x^{3}y^{5}$,
$x^{4}zt^{2}$,
$x^{4}ytw$, $x^{4}y^{2}z^{2}$, $x^{4}y^{3}t$, $x^{5}z^{2}t$,
$x^{5}yzw$, $x^{5}y^{3}z$,
$x^{6}z^{3}$, $x^{6}yzt$, $x^{6}y^{2}w$, $x^{6}y^{4}$, $x^{7}tw$, $x^{7}yz^{2}$, $x^{7}y^{2}t$, $x^{8}t^{2}$, $x^{8}zw$, $x^{8}y^{2}z$,
$x^{9}zt$, $x^{9}yw$, $x^{9}y^{3}$, $x^{10}z^{2}$, $x^{10}yt$, $x^{11}yz$, $x^{12}w$,  $x^{12}y^{2}$, $x^{13}t$, $x^{14}z$, $x^{15}y$, $x^{18}$
  \end{center}
\smallskip
\end{minipage}\\
\hline
49 &  \begin{minipage}[m]{.03\linewidth}\setlength{\unitlength}{.20mm}
  \smallskip
  \begin{center}
$21$
\end{center}
\smallskip
\end{minipage}
&
  \begin{minipage}[m]{.87\linewidth}\setlength{\unitlength}{.20mm}
  \smallskip
  \begin{center}
$w^{3}$, $z^{3}t$, $yt^{3}$, $yztw$, $y^{2}z^{3}$, $y^{3}t^{2}$, $y^{3}zw$, $y^{5}t$,
$xz^{4}$, $xyzt^{2}$, $xyz^{2}w$, $xy^{3}zt$,
$x^{2}t^{2}w$,  $x^{2}yz^{2}t$, $x^{2}y^{2}tw$, $x^{2}y^{3}z^{2}$, $x^{2}y^{4}w$,
  $x^{3}ztw$, $x^{3}y^{2}t^{2}$, $x^{3}y^{2}zw$, $x^{3}y^{4}t$,
 $x^{4}zt^{2}$, $x^{4}z^{2}w$,  $x^{4}y^{2}zt$, $x^{4}y^{4}z$,
$x^{5}z^{2}t$, $x^{5}ytw$, $x^{5}y^{2}z^{2}$, $x^{5}y^{3}w$,
$x^{6}yt^{2}$, $x^{6}yzw$, $x^{6}y^{3}t$,
 $x^{7}yzt$,
$x^{7}y^{3}z$, $x^{8}tw$, $x^{8}yz^{2}$, $x^{8}y^{2}w$, $x^{9}t^{2}$, $x^{9}zw$,
$x^{9}y^{2}t$, $x^{9}y^{4}$,
$x^{10}zt$, $x^{10}y^{2}z$, $x^{11}z^{2}$, $x^{11}yw$,
$x^{12}yt$, $x^{12}y^{3}$, $x^{13}yz$, $x^{14}w$,
$x^{15}t$, $x^{15}y^{2}$, $x^{16}z$, $x^{18}y$, $x^{21}$
\  \end{center}
\smallskip
\end{minipage}\\
\hline
59 &  \begin{minipage}[m]{.03\linewidth}\setlength{\unitlength}{.20mm}
  \smallskip
  \begin{center}
$24$
\end{center}
\smallskip
\end{minipage}
&
  \begin{minipage}[m]{.87\linewidth}\setlength{\unitlength}{.20mm}
  \smallskip
  \begin{center}
$w^{3}$, $z^{4}$, $yt^{3}$, $yztw$, $y^{2}z^{3}$, $y^{3}tw$, $y^{4}z^{2}$,
$y^{6}z$,   $xyz^{2}w$, $xy^{3}zw$,
$xy^{5}w$, $x^{2}t^{2}w$,  $x^{2}yz^{2}t$,  $x^{2}y^{3}zt$,
$x^{2}y^{5}t$,  $x^{3}ztw$, $x^{3}yz^{3}$, $x^{3}y^{2}tw$, $x^{3}y^{3}z^{2}$,
$x^{3}y^{5}z$,  $x^{4}zt^{2}$, $x^{4}z^{2}w$, $x^{4}y^{2}t^{2}$, $x^{4}y^{2}zw$,
$x^{4}y^{4}w$, $x^{5}z^{2}t$,  $x^{5}y^{2}zt$, $x^{5}y^{4}t$, $x^{6}z^{3}$,
$x^{6}ytw$, $x^{6}y^{2}z^{2}$, $x^{6}y^{4}z$, $x^{7}yzw$,
$x^{7}y^{3}w$, $x^{8}yzt$, $x^{8}y^{3}t$, $x^{9}tw$, $x^{9}yz^{2}$,
$x^{9}y^{3}z$, $x^{9}y^{5}$, $x^{10}t^{2}$, $x^{10}zw$, $x^{10}y^{2}w$, $x^{11}zt$
$x^{11}y^{2}t$, $x^{12}z^{2}$, $x^{12}y^{2}z$, $x^{12}y^{4}$, $x^{13}yw$, $x^{14}yt$,
$x^{15}yz$, $x^{15}y^{3}$, $x^{16}w$, $x^{17}t$, $x^{18}z$, $x^{18}y^{2}$, $x^{21}y$, $x^{24}$
  \end{center}
\smallskip
\end{minipage}\\
\hline
66 &   \begin{minipage}[m]{.03\linewidth}\setlength{\unitlength}{.20mm}
  \smallskip
  \begin{center}
$27$
\end{center}
\smallskip
\end{minipage}
&
  \begin{minipage}[m]{.87\linewidth}\setlength{\unitlength}{.20mm}
  \smallskip
  \begin{center}
$w^{3}$, $zt^{3}$, $z^{3}w$, $yztw$, $y^{3}z^{2}$, $y^{4}t$, $xz^{2}t^{2}$,
$xyz^{2}w$, $xy^{2}tw$,  $x^{2}z^{3}t$, $x^{2}yzt^{2}$,
$x^{2}y^{2}zw$,  $x^{3}z^{4}$, $x^{3}yz^{2}t$, $x^{3}y^{2}t^{2}$,
$x^{3}y^{3}w$, $x^{4}t^{2}w$,  $x^{4}yz^{3}$, $x^{4}y^{2}zt$, $x^{5}ztw$,
$x^{5}y^{2}z^{2}$,  $x^{6}z^{2}w$, $x^{6}ytw$, $x^{6}y^{3}z$,
$x^{7}zt^{2}$, $x^{7}yzw$,  $x^{8}z^{2}t$, $x^{8}yt^{2}$, $x^{8}y^{2}w$,
 $x^{9}z^{3}$, $x^{9}yzt$, $x^{10}yz^{2}$, $x^{10}y^{2}t$, $x^{11}tw$,
$x^{11}y^{2}z$, $x^{12}zw$, $x^{12}y^{3}$, $x^{13}t^{2}$, $x^{13}yw$, $x^{14}zt$,
$x^{15}z^{2}$, $x^{15}yt$, $x^{16}yz$, $x^{17}y^{2}$,
$x^{18}w$, $x^{20}t$, $x^{21}z$, $x^{22}y$, $x^{27}$
  \end{center}
\smallskip
\end{minipage}\\
\hline
84 &  \begin{minipage}[m]{.03\linewidth}\setlength{\unitlength}{.20mm}
  \smallskip
  \begin{center}
$36$
\end{center}
\smallskip
\end{minipage}
&
  \begin{minipage}[m]{.87\linewidth}\setlength{\unitlength}{.20mm}
  \smallskip
  \begin{center}
$w^{3}$, $t^{4}$, $z^{3}w$, $yztw$, $y^{4}z$, $xyz^{2}w$, $xy^{2}tw$,  $x^{2}z^{2}t^{2}$, $x^{2}y^{2}zw$,
 $x^{3}z^{3}t$, $x^{3}yzt^{2}$, $x^{3}y^{3}w$,  $x^{4}z^{4}$, $x^{4}yz^{2}t$, $x^{4}y^{2}t^{2}$,
 $x^{5}yz^{3}$, $x^{5}y^{2}zt$, $x^{6}t^{2}w$, $x^{6}y^{2}z^{2}$, $x^{6}y^{3}t$, $x^{7}ztw$,
$x^{8}z^{2}w$, $x^{8}ytw$, $x^{9}yzw$, $x^{10}zt^{2}$, $x^{10}y^{2}w$, $x^{11}z^{2}t$, $x^{11}yt^{2}$,
  $x^{12}z^{3}$, $x^{12}yzt$, $x^{13}yz^{2}$, $x^{13}y^{2}t$, $x^{14}y^{2}z$, $x^{15}tw$, $x^{15}y^{3}$, $x^{16}zw$, $x^{17}yw$,
$x^{18}t^{2}$, $x^{19}zt$, $x^{20}z^{2}$, $x^{20}yt$, $x^{21}yz$, $x^{22}y^{2}$, $x^{24}w$, $x^{27}t$, $x^{28}z$, $x^{29}y$, $x^{36}$
   \end{center}
\smallskip
\end{minipage}\\
\hline
\end{longtable}
Using this, we see that the automorphism group of a general hypersurface in each of the families \textnumero 39, \textnumero 49, \textnumero 59, \textnumero 66, \textnumero 84 is trivial.
In particular, it does not contain any involutions. Therefore, $\mathrm{d}(X)=3$.

\section{Proof of Main Theorem}
\label{section:main}

Let us use assumptions and notations of Section~\ref{section:intro}. Suppose, in addition, that $I=1$.
If $X_d$ is a general member of the families
\textnumero 1, \textnumero 19, \textnumero 28, \textnumero 39, \textnumero 49, \textnumero 59, \textnumero 66, \textnumero 84,
then it follows from the results proved earlier in Sections~\ref{section:1}, \ref{section:19-28}, and  \ref{section:the other}  that $\mathrm{deg}(X_d)=3$.
Thus, to prove Main Theorem, we may assume that $X_d$ is not contained is the deformation families
\textnumero 1, \textnumero 19, \textnumero 28, \textnumero 39, \textnumero 49, \textnumero 59, \textnumero 66, \textnumero 84.
Let us show that $\mathrm{d}(X_{d})=2$.

We know from \cite{CheltsovPark2017, CPR, Is80b} that $X_d$ is irrational.
Thus, we have $\mathrm{d}(X_{d})\geqslant 2$.
Moreover, the natural projection
$$
\mathbb{P}(1,a_{2},a_{3},a_{4},a_{5})\dasharrow\mathbb{P}(1,a_{2},a_{3},a_{4})
$$
induces a dominant map
$X_{d}\dasharrow\mathbb{P}(1,a_{2},a_{3},a_{4})$ that is generically two-to-one if $d<3a_{5}$.
Thus, if $d<3a_{5}$, then $\mathrm{d}(X_{d})=2$.

Hence, to complete the proof of Main Theorem, we may further assume that $d\geqslant 3a_5$.
Then the 3-fold $X_d$ is contained in one of the deformation families \textnumero 9, \textnumero  17, \textnumero  27,
and the septuple  $(a_1,a_{2},a_{3},a_{4},a_{5},d,I)$ is described in the following table below.
\begin{center}
\renewcommand\arraystretch{1.4}
\begin{tabular}{|c||c|c|c|c|c|c|c|}
\hline \textnumero & $a_1$ & $a_2$ & $a_3$ & $a_4$ & $a_5$ & $d$ & $I$\\
\hline\hline
9 & $1$ & $1$ & $2$ & $3$ & $3$ & $9$ & $1$\\
\hline
17 & $1$ & $1$ & $3$ & $4$ & $4$ & $12$ & $1$\\
\hline
27 & $1$ & $2$ & $3$ & $5$ & $5$ & $15$ & $1$ \\
\hline
\end{tabular}
\end{center}

In each of these three cases, we can change quasi-homogeneous coordinates on $\mathbb{P}(1,a_{2},a_{3},a_{4},a_5)$ such that
the hypersurface $X_d$ is given by
\begin{multline*}
\quad \quad \quad \quad wt(w-t)+w^2f_{a_5}(x,y,z)+t^2g_{a_5}(x,y,z)+wth_{a_5}(x,y,z)+\\
+wf_{2a_5}(x,y,z)+tg_{2a_5}(x,y,z)+h_d(x,y,z)=0,\quad \quad \quad \quad
\end{multline*}
where $f_\ell$, $g_\ell$, $h_\ell$ are quasihomogeneous polynomials of degrees $\ell$ in $x, y, z$.
Note that $X_d$ has three singular points of type $\frac{1}{a_5}(1,a_2, a_3)$ at
$[0:0:0:0:1]$, $[0:0:0:1:0]$,  and $[0:0:0:1:1]$.
The map $X_{d}\dasharrow\mathbb{P}(1,a_{2},a_{3},a_{4})$
given by $(x:y:z:t:w)\mapsto(x:y:z:t)$ is a dominant rational map that is undefined in $[0:0:0:0:1]$.
Moreover, this map is generically two-to-one, which implies that $\mathrm{d}(X_{d})=2$.

\begin{remark}
\label{remark:elliptic-fibration}
Let $\eta\colon\mathbb{P}(1,a_{2},a_{3},a_{4},a_{5})\dasharrow\mathbb{P}(1,a_{2},a_{3})$ be the natural projection.
Then, arguing as in \cite{Cheltsov2007}, we can resolve
the indeterminacy of the rational map $\eta$ via the following commutative diagram
$$
\xymatrix{
U_{1}\ar@{->}[d]_{\pi_1}&&U_{2}\ar@{->}[ll]_{\pi_{2}}&&U\ar@{->}[ll]_{\pi_{3}}\ar@{->}[d]^{\zeta}\\
X_d\ar@{-->}[rrrr]_{\eta}&&&&\mathbb{P}(1, a_2, a_3)}
$$
where $\pi_1$ is the weighted blow up of the point $[0:0:0:0:1]$ with weights $(1, a_2, a_3)$,
$\pi_{2}$ is the weighted blow up with  weights $(1, a_2, a_3)$ of the point in $U_1$ corresponding to $[0:0:0:1:0]$,
and $\pi_{3}$ is the weighted blow up with  weights $(1, a_2, a_3)$ of the point in $U_{2}$ corresponding to $[0:0:0:1:1]$.
Then $\zeta$ is an elliptic fibration, since
the fibre of $\eta$ over a general point on $\mathbb{P}(1,a_{2},a_{3})$ is isomorphic to a smooth cubic curve on $\mathbb{P}^2$.
Furthermore, the exceptional surface of the weighted blow up $\pi_i$, which is isomorphic to $\mathbb{P}(1, a_2, a_3)$,
yields a section of the elliptic fibration.
This again implies that $\mathrm{d}(X_{d})=2$.
\end{remark}

\end{document}